\newif\ifams\amstrue
\else\usepackage{amsmath,amssymb}\fi
\newtheorem{theorem}{Theorem}[section]
\newtheorem{lemma}[theorem]{Lemma}
\newtheorem{proposition}[theorem]{Proposition}
\newtheorem{problem}[theorem]{Problem}
\newtheorem{definition}[theorem]{Definition}
\newcommand{\tpt}{\mathbf{2}+\mathbf{2}}
\newcommand{\tpo}{\mathbf{3}+\mathbf{1}}
\author{Csaba Bir\'o}
\address{Department of Mathematics, University of Louisville, Louisville, KY 40220}
\author{Sida Wan}
\address{Department of Mathematics, University of Louisville, Louisville, KY 40220}
\author{Csaba Bir\'o \and Sida Wan}
\institute{Department of Mathematics, University of Louisville, Louisville, KY 40220}
\title[Dimension bounds on classes of interval orders]{Dimension bounds on classes of interval orders with restricted representation}
\begin{document}
\else
\begin{document}
\title{Dimension bounds on classes of interval orders with restricted representation}
\maketitle
\fi

\begin{abstract}
In general, representations of interval orders may use an arbitrary set of interval lengths. We can define subclasses of interval orders by restricting the allowable lengths of intervals. Motivated by a recent paper of Keller, Trenk, and Young, we study the dimension of posets in some of these subclasses. Among other results, we answer several of their questions, and we simplify the proof of one of their main results.
\end{abstract}

\ifams\maketitle\fi

\section{Introduction}\label{sec:intro}

\subsection{Interval orders}
A \emph{partial order} $P$ is a reflexive, antisymmetric, and transitive relation on a set. A set $X$ together with a partial order $P$ on $X$ is called a \emph{poset}, with \emph{ground set} $X$. In this paper, we assume $X$ is finite. The elements of $X$ are sometimes called \emph{points}, or \emph{vertices}. To avoid clutter, with a slight abuse of notation, we will often use the same letter to denote the poset, its ground set, and the relation.

Let $\mathcal{I}$ be the set of nonempty closed intervals of the real line.
An \emph{interval representation} of $P$ is a function $f:P\to\mathcal{I}$ such that $x<y$ in $P$ if and only if $f(x)$ is entirely left of (and hence disjoint from) $f(y)$. An \emph{interval order} is a poset that has an interval representation.

Restrictions can be placed on the codomain of $f$ to define subclasses of interval orders. Let $S$ be a nonempty set of nonnegative real numbers. An \emph{$S$-representation} of $P$ is an interval representation of $P$ such that the length of $f(x)$ is in $S$ for all $x\in P$.

Following Fishburn and Graham \cite{FG-85}, we will use the notation $C(S)$ to denote the family of posets that have an $S$-representation. As a special case, $C([\alpha,\beta])$ denotes the family of posets for which there is a representation with intervals of lengths between $\alpha$ and $\beta$ (inclusive). We will use the shorthands $C[\alpha,\beta]=C([\alpha,\beta])$, and $C(\alpha)=C([1,\alpha])$.

Obviously, $C[\alpha, \beta] = C[m\alpha, m\beta]$, for all $m\in\mathbb{R}^+$. It is also clear that the class of interval orders is $C(\mathbb{R}^+\cup\{0\})=C(\mathbb{R}^+)$.

A well-studied subclass of interval orders is $C(1)$ which is called the class of \emph{semiorders}. Characterization of interval orders and semiorders via forbidden subposets are classical results of poset theory. Fishburn \cite{F-70} proved that a poset is an interval order, if and only if contains no $\tpt$: a poset consisting of two incomparable chains, each of size $2$. Scott and Suppes \cite{SS-58} proved that a poset is a semiorder if and only if it contains no $\tpt$ and no $\tpo$; this latter is a poset consisting of two incomparable chains, sizes $3$ and $1$.

The \emph{interval count} of the interval order $P$, denoted by $IC(P)$ is the least integer $k$ for which $P$ has an $S$-representation such that $|S|=k$. The interval count problem was suggested by Graham (cf.~\cite{LAP-82}). Semiorders are exactly the posets $P$ for which $IC(P)=1$. However, the problem of deciding whether a poset is of interval count at most $2$ already seems to be a difficult problem. The decision problem is not known to be either NP-complete or polynomial.

According to a survey by Cerioli et al. \cite{COS-12}, ``The interval count problem is an intriguing problem, in which very intuitive statements are proved not to hold.'' For example, for each $k\geq 2$, Fishburn \cite{F} constructed an infinite family of minimal interval orders whose interval count is greater than $k$, so for $k\geq 2$, the class of posets with interval count $k$ does not have a characterization with a finite set of minimal excluded subposets.

\subsection{Dimension}
A set of linear extensions $\{L_1,\ldots,L_t\}$ of the poset $P$ is called a \emph{realizer}, if $P=\cap L_i$. To verify that a certain set of linear extensions is a realizer, it suffices to verify that for every pair $(x,y)$ of incomparable elements there exists a linear extensions $L_i$ in the set in which $x>y$. We will refer to this property as $(x,y)$ being \emph{reversed} in $L_i$. (In fact it suffices to verify that every so-called \emph{critical pair} is reversed, but we will not make use of this property.)

The minimum cardinality of a realizer is the \emph{dimension} of $P$. An extensive treatise of dimension theory of posets is the monograph by Trotter \cite{T}. Since dimension is a classic way of measuring the complexity of a poset, it is natural to ask if the dimension of (restricted) interval orders is bounded.

It has been well-known that the dimension of the class of interval orders is unbounded. In fact, the dimension of the universal interval order $I_n$, whose representation uses every closed interval with integer endpoints between $1$ and $n$, is well-understood. The rate of growth, $\Theta(\lg\lg n)$, was first determined by F\"uredi et al.~\cite{FHRT-92}, but using further results, the exact value is known within an error of $5$.

On the other hand, Rabinovitch \cite{R-78} proved that the dimension of semiorders is at most $3$. A simple proof of this theorem is provided by Bosek et al.~ \cite{BKKM-12}. Similar techniques to their proof is used in this paper.

\subsection{Motivation}
Allowing open intervals of arbitrary length in a representation does not extend the class of interval orders. However, this is not the case with restricted classes.

In a recent paper, Keller, Trenk, and Young \cite{KTY-22} proved the following result.
\begin{theorem}\label{thm:KTY}
If $P$ is in $C(\{0,1\})$, then $\dim(P)\leq 3$.
\end{theorem}

They proposed among others, the following problems (paraphrased).
\begin{problem}\ \label{pr:KTY}
\begin{enumerate}
\item \label{pr:KTY2} Find an upper bound on the dimension for posets in $C(\{r,s\})$ with $r,s>0$.
\item \label{pr:KTYr} More generally, find an upper bound on the dimension for posets in $C(S)$ in terms of $|S|$.
\end{enumerate}
\end{problem}

They offered the easy upper bounds $8$ for (\ref{pr:KTY2}), and $3r+\binom{|S|}{2}$ for (\ref{pr:KTYr}). In Section~\ref{sec:KTYq}, we give better bounds for both parts. In fact, together with a computer search done by K\'ezdy, we will see that the maximum dimension of posets in $C(\{r,s\})$, $r,s>0$ is between $4$ and $5$. Also, using the techniques we develop, we simplify the proof of Theorem~\ref{thm:KTY}, and we prove some other related results.

\section{Twin-free and distinguishing representations}

Let $P$ be an interval order, and fix a representation for $P$. Let $x,y\in P$ be such that the same interval is assigned to both. We call $x$ and $y$ a \emph{twin} (of points). If a representation does not have any twins, we call it \emph{twin-free}. A representation of an interval order is \emph{distinguishing}, if every real number occurs at most once as an endpoint of an interval of the representation, i.e.~no two intervals share an endpoint. A distinguishing representation is of course twin-free.

Let $P$ be a poset, and $x,y\in P$. We say $x$ and $y$ have \emph{duplicated holdings}, if $\{z\in P: z>x\}=\{z\in P: z>y\}$ and $\{z\in P: z<x\}=\{z\in P: z<y\}$; in other words the upsets and the downsets of $x$ and $y$ are the same. If $P$ is an interval order with a representation in which $x$ and $y$ are twins, then they have duplicated holdings. So, if an interval order has no duplicated holdings, then every representation is twin-free.

One important property of two elements with duplicated holdings is that we may discard one of them without reducing the dimension (as long as the dimension is at least $2$). We will use this property later by assuming that a certain poset, for which we are proving an upper bound for its dimension, has no duplicated holdings.

It is easy to see, that every interval order has a distinguishing $\mathbb{R}^+$-representation. Things get less obvious for other $S$-representations. For example, an antichain of size at least $2$ does not have a distinguishing (or even twin-free) $\{0\}$-representation. We prove that---essentially---this is the only problem case.

We caution the reader that the usual handwaving argument of ``just shake the intervals a little until there are no common endpoints'' will not work here. Since we are not allowed the change of the set of lengths, common endpoints may be essential and necessary. The fact that we use closed intervals plays a crucial role here. E.g. $\tpo$ has a $\{1\}$-representation using open and closed intervals, but there is no distinguishing representation.

Let $P$ be a poset, and let $f$ be a representation. In the following proofs and the balance of the paper, we use the notation $\ell_x$ and $r_x$ for the left and right endpoint of $f(x)$. It will also be convenient to assume that $f$ is injective, and its range is a multiset of intervals. Sometimes, this multiset will be refered to as the representation of the poset. With a further slight abuse of notation, this multiset may also be referred as $P$.

\begin{theorem}\label{thm:representations}
Let $S\subseteq\mathbb{R}^+\cup\{0\}$, $S\neq\emptyset$.
\begin{enumerate}
\item\label{part:1} Every poset $P\in C(S)$ that has a twin-free $S$-representation also has a distinguishing $S$-representation.
\item\label{part:2} If $0\not\in S$, then every poset $P\in C(S)$ has a distinguishing $S$-representation.
\end{enumerate} 
\end{theorem}

Before the proof, we define two symmetric operations on the multiset of intervals of a representation. These will preserve the lengths of the intervals, and they will be used to decrease the number of common endpoints.

\begin{definition}
Let $P$ be a poset with a fixed representation. Let $c\in\mathbb{R}$, and $\epsilon>0$. Let $L=\{x\in P: \ell_x<c\}$, and let $R=P-L$. Define $L'=\{[\ell_x+\epsilon,r_x+\epsilon]:x\in P\}$. Let $P'=L'\cup R$, a multiset of intervals. The operation that creates $P'$ from $P$ is what we call ``left pull'' with parameters $c$ and $\epsilon$.

We can similarly define ``right pull''. Let $R=\{x\in P: r_x>c\}$, and let $L=P-R$. Define $R'=\{[\ell_x-\epsilon,r_x-\epsilon]:x\in P\}$. Let $P'=L\cup R'$ to define the operation of right pull.
\end{definition}

\begin{lemma}\label{lem:pull}
Let $P$ be a poset (representation), $c\in\mathbb{R}$, and let $\epsilon=\frac12\min\{|a-b|:\text{$a$ and $b$ are distinct endpoints}\}$. Let $P'$ be the left (right) pull of $P$ with parameters $c$ and $\epsilon$. Then $P$ and $P'$ represent isomorphic posets.
\end{lemma}

\begin{proof}
We will do the proof for left pulls. The argument for right pulls is symmetric.

Notice that if $a$ and $b$ are two endpoints of intervals of $P$, then their relation won't change, unless $a=b$. More precisely, if $a<b$ in $P$ then the corresponding points in $P'$ will maintain this relation. Similarly for $a>b$.

So if $x$ and $y$ are two intervals in $P$ with no common endpoints, then their (poset) relation is maintained in $P'$.

Now suppose that $x$ and $y$ are intervals with some common endpoints. There are a few cases to consider.

If $\ell_x=\ell_y$ then either $x,y\in L$ or $x,y\in R$, so either both are shifted, or neither. Therefore $x\|y$ both in $P$ and in $P'$.

Now suppose $\ell_x\neq \ell_y$; without loss of generality $\ell_x<\ell_y$. Also assume $r_x=r_y$. Then $\ell_x+\epsilon<\ell_y$, so $x\|y$ both in $P$ and in $P'$.

The remaining case is, without loss of generality, $r_x=\ell_y$. Then $r_x+\epsilon<r_y$ (unless $\ell_y=r_y=r_x$, which was covered in the second case), so, again $x\|y$ both in $P$ and in $P'$.
\end{proof}

Now we are ready to prove Theorem~\ref{thm:representations}.

\begin{proof}[Proof of Theorem~\ref{thm:representations}]
Let $S\subseteq\mathbb{R}^+\cup\{0\}$, $S\neq\emptyset$, and let $P \in C(S)$. Consider an $S$-representation of $P$.

We will perform left and right pulls until no common endpoints remain except for twins. Let $x$, $y$ be two intervals with a common endpoint, but $x\neq y$. Let $\epsilon=\frac12\min\{|a-b|:\text{$a$ and $b$ are distinct endpoints}\}$, as in Lemma~\ref{lem:pull}.

\begin{itemize}
\item
If $\ell_x=\ell_y$ and $r_x\neq r_y$, perform a right pull with $c=\min\{r_x,r_y\}$ and $\epsilon$.
\item
If $r_x=r_y$ and $\ell_x\neq \ell_y$, perform a left pull with $c=\max\{\ell_x,\ell_y\}$ and $\epsilon$.
\item If $\ell_x<r_x=\ell_y<r_y$ (or vice versa) either a left or a right pull will work with $c=r_x=\ell_y$.
\end{itemize}

Note that even though the definition of $\epsilon$ looks the same in every step, the actual value will change as the representation changes. Indeed, it is easy to see that $\epsilon$ is getting halved in every step.

If $P$ started with a twin-free representation, then we have arrived to a distinguishing representation, so part~(\ref{part:1}) is proven.

If $P$ had twins, those are still present at the representation. Let $x$ and $y$ be identical intervals of the representation, and let $\epsilon=\frac12\min\{|a-b|:\text{$a$ and $b$ are distinct endpoints}\}$ again. If $0\not\in S$, then the length of $x$ (and hence the length of $y$) is positive. Note that this length is at least $\epsilon$. Move $x$ by $\epsilon$ to the right, that is, replace $x$ with the interval $[\ell_x+\epsilon,r_x+\epsilon]$. The new representation will not have the $x$,$y$ twin and represents the same poset. Repeat this until all twins disappear.
\end{proof}

\section{Choice functions and partitions}

Let $P$ be an interval order with a representation $I$. An injective function $f:\mathbb{R}\to\mathbb{R}$ is a \emph{choice function}, if $f(x)\in I(x)$ for all $x\in P$. Every choice function $f$ defines a linear extension $L(f)$ of $P$ in a natural way: $x<y$ in $L(f)$ if $f(x)<f(y)$.

Choice functions were defined by Kierstead and Trotter \cite{KT-00}. Theorems~\ref{thm:choice} and \ref{thm:partition} appear in their paper, but our proofs in this section are different; in fact we found them before we found their paper. We feel that the proofs here are more transparent and more constructive in the sense, that they lead to easy-to-implement algorithms.

What makes the simple idea of choice functions powerful is the following theorem.

\begin{theorem}\label{thm:choice}
Let $P$ be an interval order with no duplicated holdings, and let $I$ be a distinguishing representation. If $L$ is a linear extension of $P$, then there exists a choice function $f$ with $L(f)=L$.
\end{theorem}

\begin{proof}
Let $|P|=n$, and 
let $\epsilon=\frac{1}{n}\min\{|a-b|:\text{$a$ and $b$ are distinct endpoints}\}$.
Let $L$ be such that $x_1<\cdots<x_n$ in $L$. We will construct $f$ recursively. 

\[
f(x_i)=
\begin{cases}
f(x_{i-1})+\epsilon & \text{if $i\geq 2$ and $f(x_{i-1})\geq \ell_{x_i}$}\\
\ell_{x_i} & \text{otherwise}\\
\end{cases}
\]

Obviously, $f(x_{i-1})<f(x_i)$ for all $i$. So we only need to show that $f(x_i)\in I(x_i)$.

If $f(x_i)=\ell_{x_i}$ this is clear, so we may assume that $f(x_i)\neq \ell_{x_i}$. Call the points $x_i$ with this property ``forced''.

Let $j$ be the greatest positive integer such that $j\leq i$ and $x_j$ is not forced. 
Firstly, $r_{x_i}>\ell_{x_j}$, 
secondly, $r_{x_i}\geq \ell_{x_j}+n\epsilon$.
Hence $f(x_i)=\ell_{x_j}+(i-j)\epsilon\leq \ell_{x_j}+n\epsilon\leq r_{x_i}$, and so
$\ell_{x_i}\leq f(x_{i-1})\leq f(x_i)\leq r_{x_i}$.
\end{proof}

Theorem~\ref{thm:representations} and Theorem~\ref{thm:choice} show that constructing linear extensions for posets in classes $C(S)$ can usually be done by constructing choice functions. This makes choice functions a useful tool for upper bound proofs on the dimension for these classes. We will demonstrate this repeatedly in the balance of the paper.

The following theorem shows how special interval orders are. Nothing remotely close is true for general posets.

\begin{theorem}\label{thm:partition}
Let $P$ be an interval order, and partition the ground set $X$ into $s$ parts: $X=X_1\cup\cdots\cup X_s$. Let $L_i$ be a linear extensions of $P|_{X_i}$. Then there exists a linear extension $L$ of $P$ such that $L|_{X_i}=L_i$.
\end{theorem}

\begin{proof}
The $s=1$ case is trivial. For $s\geq 2$, we will use induction on $s$. Let $s=2$.

Let $X_1,X_2,L_1,L_2$ be defined as in the lemma. Define the relation $E=L_1\cup L_2\cup  P$, and the directed graph $G=(X,E)$; that is, $G$ is the directed graph fromed by the $<$ relation in $P$, $L_1$, and $L_2$. It is sufficient to show that $G$ has no directed closed walk; indeed, if that is the case, the transitive closure $T$ of $G$ is an extension of the poset $P$, and any linear extension $L$ of $T$ will satisfy the requirements of the conclusion of the lemma.

Suppose for a contradiction that $G$ contains a directed closed walk.
Since neither $G[X_1]$ nor $G[X_2]$ contains a directed closed walk, every directed closed walk in $G$ must have both an $X_1X_2$ and an $X_2X_1$ edge. We will call these edges \emph{cross-edges}. Let $C$ be a directed closed walk in $G$ with the minimum number of cross-edges.

As we noted, $C$ contains at least one $X_1X_2$ edge; let $(a,b)$ be such an edge. Let $(c,d)$ be the first $X_2X_1$ edge that follows $(a,b)$ in $C$. Observe that $c<d$, $a<b$ in $P$, and $b\leq c$ in $L_2$. If $d=a$, then $c<d=a<b$ in $P$, which would contradict $b\leq c$ in $L_2$. If $d>a$ in $L_1$, then we could eliminate the path $ab\ldots cd$ in $C$, replacing it with the single-edge path $ad$, and thereby decreasing the number of cross-edges in $C$, contradicting the minimality of $C$. (See Figure~\ref{fig:cycles}.)

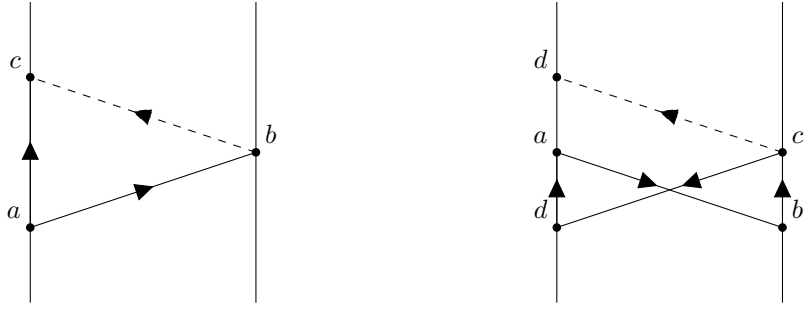
\begin{figure}
\begin{tikzpicture}
    \tikzstyle{oval} = [ellipse, minimum width=2cm, minimum height=4cm, draw]
    \tikzstyle{point} = [circle, minimum size=4pt, inner sep=0pt, fill, draw]

    \node[oval] (leftOval) at (0,0) [label=$X_1$] {};
    \node[oval] (rightOval) at (3,0) [label=$X_2$] {};

    \coordinate (leftTop) at ([yshift=1.5cm]leftOval.center);
    \coordinate (rightTop) at ([yshift=1.5cm]rightOval.center);
    \coordinate (leftBottom) at ([yshift=-1.5cm]leftOval.center);
    \coordinate (rightBottom) at ([yshift=-1.5cm]rightOval.center);

    \draw[-{Latex[length=3mm]}] (leftTop) to[bend left] (rightTop);
    \draw[-{Latex[length=3mm]}] (rightBottom) to[bend left] (leftBottom);

    \draw[-{Latex[length=3mm]}, decorate, decoration={snake, amplitude=.4mm, segment length=2mm, post length=3mm}] (rightTop) to[bend left=20] (rightBottom);
    
    \draw[-{Latex[length=3mm]}, decorate, decoration={snake, amplitude=.4mm, segment length=2mm, post length=3mm}] plot [smooth, tension=1] coordinates {(leftBottom) (3,0) (leftTop)};
    
    \draw[-{Latex[length=3mm]},dashed] (leftBottom) to[bend left=20] (leftTop);
    
    \node[point] at (leftTop) [label=left:$a$] {};
    \node[point] at (rightTop) [label=right:$b$] {};
    \node[point] at (rightBottom) [label=right:$c$] {};
    \node[point] at (leftBottom) [label=left:$d$] {};
\end{tikzpicture}
\caption{Minimal oriented cycles}\label{fig:cycles}
\end{figure}

So we conclude that $d<a$ in $L_1$, and recall that $b\leq c$ in $L_2$. If $b\leq c$ in $P$, then $a<b\leq c<d$ would contradict $d<a$ in $L_1$. (In particular, $b\neq c$.) Obviously, $b\not>c$ in $P$, so $b\|c$ in $P$. Similarly, $d\| a$ in $P$. Hence the set $\{a,b,c,d\}$ induces a $\mathbf{2}+\mathbf{2}$ in $P$, a contradiction.

If $s>2$, then we can apply the hypothesis for the part $X'=X_1\cup\cdots\cup X_{s-1}$, and use the $s=2$ case for $X'$ and $X_s$. This finishes the proof.
\end{proof}

We conclude the section with the following all-important corollary, which is a generalization of a theorem by Kierstead and Trotter~\cite{KT-00}.

\begin{theorem}\label{thm:maxplus2}
Let $P$ be an interval order, and partition the ground set $X$ into $s$ parts: $X=X_1\cup\cdots\cup X_s$. Let $P_i=P|_{X_i}$. Then
\[
\dim(P)\leq \max\{\dim(P_1),\ldots,\dim(P_s)\}+2\lceil \lg s\rceil
\]
\end{theorem}

\begin{proof}
Let $t=\max\{\dim(P_1),\dim(P_2)\}$. We proceed
by induction on $s$. Trivial for $s=1$; let $s=2$. Consider a distinguishing representation of $P$.

By Theorem ~\ref{thm:partition}, there exists a family $\mathcal{R}$ of $t$ linear extensions of $P$, such that the restriction of the linear extensions in $\mathcal{R}$ to each $X_i$ form a realizer of $P_i$. Then define two choice functions $f_1$ and $f_2$, where $f_1(x)=\ell_x$, $f_2(x)=r_x$ for every $x \in X_1$; $f_1(y)=r_y$, $f_2(y)=\ell_y$ for every $y \in X_2$. Let $L_1=L(f_1)$, $L_2=L(f_2)$. Clearly, $\mathcal{R} \cup \{L_1, L_2\}$ is a realizer of $P$.

If $s>2$, then we can apply the hypothesis for the parts $X'=X_1\cup\cdots\cup X_{\lfloor s/2\rfloor}$ and $X''=X_{\lfloor s/2\rfloor+1}\cup\cdots\cup X_s$, and use the $s=2$ case for $X'$ and $X''$. A routine calculation finishes the proof.
\end{proof}

\section{The Keller--Trenk--Young Theorem}

This section contains a short proof of Theorem~\ref{thm:KTY}.

Let $P$ be a poset. One can partition the ground set $X$ of $P$ into antichains by subsequent removal of minimal elements. That is, let $A_1=\min(P)$, $A_2=\min(P-A_1)$, \dots, $A_i=\min(P-A_1-\cdots-A_{i-1})$. Then $X=A_1\cup\cdots\cup A_h$ is a partition, $h$ is the height of $P$, and each $A_i$ is an antichain. We will call this the \emph{ranking partition} of $P$, as it defines a rank function on graded posets. (But we may apply this definition for non-graded posets, as well.)

The following simple lemma has been in used in many arguments.

\begin{lemma}\label{lem:ranking}
If $P$ has no $\tpo$ subposet, and $A_1,\ldots,A_h$ is a ranking partition, then $A_i<A_{i+2}$ for all $i$.
\end{lemma}

\begin{proof}
Let $x\in A_i$, $y\in A_{i+2}$. Then there exists $z\in A_{i+1}$ with $z<y$, and $w\in A_i$ with $w<z$. If $x>y$, then $x>y>z>w$, which is not possible, because $x,w\in A_i$.
If $x\|y$, then the set $\{x,y,z,w\}$ induces a $\tpo$. So $x<y$.
\end{proof}

We repeat Theorem~\ref{thm:KTY} from Section~\ref{sec:intro} for the convenience of the reader.

{
\renewcommand{\thetheorem}{\ref{thm:KTY}}
\begin{theorem}
If $P$ is in $C(\{0,1\})$, then $\dim(P)\leq 3$.
\end{theorem}
\addtocounter{theorem}{-1}
}

\begin{proof}
We may assume that $P$ has no duplicated holdings, so it has a twin-free $\{0,1\}$-representation. By Theorem~\ref{thm:representations}, we may consider a distinguishing representation. Partition $P$ into $U\cup Z$, where $U$ contains the points that are represented by unit intervals, and $Z$ contains the ones whose interval length is $0$. Let $A_1,\ldots,A_h$ be a ranking partition of $P|_U$. Let $E=\cup\{A_i:\text{$i$ is even}\}$, and $O=\{A_i:\text{$i$ is odd}\}$.

Define two choice functions:
\begin{align*}
f_1(x)&=
\begin{cases}
\ell_x&\text{ if $x\in O\cup Z$}\\
r_x&\text{ if $x\in E$}
\end{cases}\\
f_2(x)&=
\begin{cases}
\ell_x&\text{ if $x\in E\cup Z$}\\
r_x&\text{ if $x\in O$}
\end{cases}
\end{align*}
Let $L_1=L(f_1)$, and $L_2=L(f_2)$. Define a third linear extension:
\[
L_3:(L_1|_{A_1})^d<\cdots<(L_1|_{A_h})^d
\]

The linear extensions $L_1$ and $L_2$ will reverse every incomparable pair for which (1) one is in $U$, the other is in $Z$; (2) one is in $E$, the other is in $O$. The only incomparable pairs left are in the same $A_i$: those are reversed in $L_3$.
\end{proof}

\section{Dimension bounds for posets of bounded interval count}
\label{sec:KTYq}

Since $C(\{0,s\})=C(\{0,1\})$ for all $s>0$, Theorem~\ref{thm:KTY} provides a perfect answer to dimension bounds on this class. The next natural question, also question (\ref{pr:KTY2}) of Problem~\ref{pr:KTY}, is to find an upper bound for the dimension of posets in the class $C(\{r,s\})$ with $r,s>0$. We give an answer below, using the terminology of interval counts.

\begin{proposition}\label{prop:KTY2}
If $P$ be an interval order with $IC(P)\leq 2$. Then $\dim(P)\leq 5$.
\end{proposition}

\begin{proof}
Consider an $\{r,s\}$-representation, and partition $P$ into $X_1\cup X_2$ with $X_1$ containing points corresponding to intervals of length $r$, and $X_2$ containing points corresponding to intervals of length $s$. Since $\dim(P|_{X_1})\leq 3$ and $\dim(P|_{X_2})\leq 3$, using Theorem~\ref{thm:maxplus2} for $s=2$, we conclude $\dim(P)\leq 5$.
\end{proof}

Andr\'e K\'ezdy (personal communication) ran a computer search using a state-of-the-art dimension algorithm to find a large dimensional poset of interval count $2$.
K\'ezdy found that the interval order represented by the $51$ intervals in the set
\[
\{[a,b]:\quad a,b\in\mathbb{Z};\quad 1\leq a,b\leq 30;\quad|b-a|\in\{1,8\}\}
\]
is $4$-dimensional. Furthermore, he found that it has a $45$-element subposet that is irreducible, i.e.\ the removal of any point reduces the dimension to $3$. (This poset can be constructed from the set of intervals above by removing the intervals $[8, 9]$, $[9, 10]$, $[10, 11]$, $[20, 21]$, $[21, 22]$, $[22, 23]$.)

This answers another question by Keller, Trenk, and Young, namely, the minimum number of interval lengths required to force a $4$-dimensional interval order.

Using the general version of Theorem~\ref{thm:maxplus2}, we conclude the following result, providing a logarithmic upper bound to question (\ref{pr:KTYr}) of Problem~\ref{pr:KTY}.

\begin{proposition}
Let $P$ be an interval order with $IC(P)\leq r$. Then $\dim(P)\leq 2\lceil\lg r\rceil+3$.
\end{proposition}

Keller, Trenk, and Young conjecture an upper bound of $O(\lg\lg r)$ for the dimension of intervals orders of count at most $r$. They base their conjecture on the dimension of the universal interval orders. However, we note that the universal interval order is a much stronger restriction than a bounded interval count. Even the rate of growth of the best upper bound remains wide open.

\section{Posets in $C(t)$}

Recall that $C(t)$ is the class of interval orders representable by intervals of length between $1$ and $t$. In this sections, we use our techniques to find upper bounds for this class.

\begin{theorem}\label{thm:C2}
If $P$ is in $C(2)$, then $\dim(P)\leq 4$.
\end{theorem}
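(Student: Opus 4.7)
The plan is to adapt the choice-function argument from the length-$\{0,1\}$ theorem, replacing its two-class ``$A_i < A_{i+2}$'' structure with a three-class ``$A_i < A_{i+3}$'' structure forced by interval lengths being at most $2$.

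First, by part~\ref{part:2} applied with $S=[1,2]$, fix a distinguishing representation of $\mathbf{P}$, and partition $X$ into antichains $A_1, A_2, \ldots, A_t$ by iteratively extracting the minimal elements. The key structural claim is that $x < y$ in $P$ whenever $x \in A_i$ and $y \in A_{i+3}$. Chasing predecessors yields $y_3 < y_2 < y_1 < y$ with $y_k \in A_{i+3-k}$; since $y_1$ and $y_2$ each have length at least $1$ and the three ``$<$''s are strict, one obtains $l_y > r_{y_3} + 2$. Since $x$ and $y_3$ both lie in $A_i$, their intervals overlap, so $l_x \le r_{y_3}$; combined with $x$ having length at most $2$, this gives $r_x \le l_x + 2 \le r_{y_3} + 2 < l_y$, i.e., $x < y$.

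With the claim in hand, define three choice functions by residue of antichain index mod $3$: for $j \in \{1,2,3\}$, set $f_j(z) = l_z$ when $z \in A_i$ with $i \equiv j \pmod{3}$ and $f_j(z) = r_z$ otherwise, and let $L_j = L(f_j)$. Let $L_4 = L_1^d(A_1) < L_1^d(A_2) < \cdots < L_1^d(A_t)$; this is a valid linear extension of $P$ because $x < y$ in $P$ forces $x$'s antichain index to be strictly smaller than $y$'s.

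To verify that $\{L_1, L_2, L_3, L_4\}$ realizes $P$, consider any incomparable pair $x \| y$ with $l_x < l_y$, so that overlap gives $l_y < r_x$. A pair inside a single antichain is reversed between $L_1$ and $L_4$, since $L_4$ is by construction the reverse of $L_1$ on each $A_i$. A pair whose two antichain indices share the same residue mod $3$ is actually comparable by the structural claim, so it does not arise. Otherwise the two residues differ: the $f_j$ whose $l$-side matches $x$'s residue satisfies $f_j(x) = l_x < r_y = f_j(y)$, placing $x < y$ in $L_j$, while the $f_{j'}$ matching $y$'s residue satisfies $f_{j'}(x) = r_x > l_y = f_{j'}(y)$, placing $y < x$ in $L_{j'}$. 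The main obstacle is the structural claim $A_i < A_{i+3}$, which crucially uses both the upper bound $2$ and the lower bound $1$ on lengths; once it is established the remaining case analysis is routine bookkeeping.
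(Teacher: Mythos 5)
Your proposal is correct and follows essentially the same route as the paper's proof: a distinguishing representation, the antichain partition with the key claim $A_i<A_{i+3}$ (using both the lower bound $1$ and the upper bound $2$ on lengths), three choice functions indexed by the antichain index mod $3$, and a fourth extension reversing pairs within each $A_i$. The only differences are cosmetic, such as swapping the $l_x/r_x$ convention in the choice functions and proving the structural claim by a direct inequality rather than by contradiction.
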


We start with a generalization of Lemma~\ref{lem:ranking}. The proof is essentially the same.

\begin{lemma}\label{lem:genranking}
If $P$ has no $\mathbf{k}+\mathbf{1}$ subposet, and $A_1,\ldots,A_h$ is a ranking partition, then $A_i<A_{i+k-1}$ for all $i$.
\end{lemma}

\begin{proof}[Proof of Theorem~\ref{thm:C2}]
By Theorem~\ref{thm:representations}, $P$ has a distinguishing $[1,2]$-representation. Let $A_1,\ldots,A_h$ be a ranking partition of $P$. Notice that $P$ has no $\mathbf{4}+\mathbf{1}$ subposet, so by Lemma~\ref{lem:genranking}, $A_i<A_{i+3}$ for all $i$. We define three choice functions $f_0,f_1,f_2$ as follows.
\[
f_i(x)=
\begin{cases}
r_x&\text{ if $x\in A_j$ with $j\equiv i\mod 3$}\\
\ell_x&\text{otherwise}
\end{cases}
\]
Let $L_0=L(f_0)$, $L_1=L(f_1)$, $L_2=L(f_2)$. Let $(x,y)$ be an incomparable pair such that $x\in A_i$, $y\in A_j$, $i\neq j$. If $\ell_x<\ell_y$, then they are reversed in $L_k$ for which $k\equiv i\mod 3$. If $\ell_x>\ell_y$, then they are reversed in $L_k$ for which $k\equiv j\mod 3$.

We still need to reverse incomparable pairs that appear within the same $A_i$. This can be done the usual way:
\[
L_3:(L_1|_{A_1})^d<\cdots<(L_1|_{A_h})^d
\]

Now $\{L_0,L_1,L_2,L_3\}$ is a realizer of $P$.
\end{proof}

A simple generalization of this proof would lead to an upper bound $\lceil t\rceil+2$ for the dimension of the class $C(t)$. However, we can do much better with a divide-and-conquer technique.

\begin{theorem}\label{thm:Ct}
For $t\geq 2$, if $P$ is in $C(t)$, then $\dim(P)\leq 2\lceil\lg\lg t\rceil+4$.
\end{theorem}

\begin{proof}
Let $n=2^{2^{\lceil\lg\lg t\rceil}}$. Since $n\geq t$, we have $P\in C(n)$. We will show, by induction on $k$, that for $n=2^{2^k}$ for some $k\in\mathbb{N}$, and for $P\in C(n)$, we have $\dim(P)\leq 2\lg\lg n+4=2k+4$. This will finish the proof.

For $k=0$, the statement is Theorem~\ref{thm:C2}. Now assume $k\geq 1$. Then $n$ is a perfect square; let $m=\sqrt{n}=2^{2^{k-1}}$. Consider a $[1,n]$-representation of $P$, and partition its ground set into points represented by ``short'' intervals, which are of length at most $m$, and ``long intervals'', which are longer than $m$. Let the subsets be $S$ and $L$, respectively.

Notice that $P|_S\in C(m)$, and $P|_L\in C(m,n)=C(n/m)=C(m)$. Using Theorem~\ref{thm:maxplus2}, we have
\[
\dim(P)\leq\max\{\dim(P|_S),\dim(P|_L)\}+2=2\lg\lg m+4+2=2(k-1)+6=2k+4.
\]
\end{proof}

We note that the universal interval order shows that Theorem~\ref{thm:Ct} is best possible in terms of rate of growth (up to a multiplicative factor).

Again, lower bounds in Theorems~\ref{thm:C2} and \ref{thm:Ct} seem to be difficult to find. The existence of a $4$-dimensional poset in $C(2)$ seems likely, but we have not succeeded in finding one. We could only show a very modest result: just one longer interval is not sufficient to raise the dimension to $4$. This is expressed in the following theorem, which is also relevant for finding lower bounds for Proposition~\ref{prop:KTY2}.

\begin{theorem}
Let $P$ be an interval order with a $\{1,s\}$-representation, such that $0\leq s\leq 2$, and there is only one interval of length $s$. Then $\dim(P)\leq 3$.
\end{theorem}

\begin{proof}
Let $x$ be the point represented by the interval of length $s$. With a shift of the intervals in the representation, we may assume that $r_x=-s/2$, and $\ell_x=s/2$, i.e.\ the midpoint of the interval of $x$ is $0$.

Partition the poset based on the position of the rest of the intervals in the representation. Let $U$ be the point corresponding to intervals with positive endpoints, $D$ of the same with negative endpoints, and let $M$ be the set of points with intervals containing the number $0$.

Let $U_1,\ldots,U_m$ be a ranking partition of $P|_U$, and let $D_1,\ldots,D_k$ be a ranking partition of $(P|_D)^d$. Now the ground set of $P$ is partitioned into the sets
\[
D_k,D_{k-1},\ldots,D_1,M,U_1,\ldots,U_m.
\]
Reindex this partition so that the same parts, in the same order, are denoted by $S_1,\ldots,S_{m+k+1}$. We claim that $S_i<S_{i+2}$ for all $i$.

Since $P|_D$ and $P|_U$ are semiorders, and hence have no $\tpo$, by Lemma~\ref{lem:ranking}, it is sufficient to prove that $x<y$ whenever (1) $x\in D_2$, $y\in M$; or (2) $x\in D_1$, $y\in U_1$; or (3) $x\in M$, $y\in U_2$. Of these, two is clear, because $x$ has negative right endpoint, and $y$ has positive left endpoint. The proof of (1) and (3) is symmetric, so we assume $x\in M$ and $y\in U_2$.

Since $y\in U_2$, there exists $z\in U_1$ with $z<y$. Hence
\[
r_x=s/2\leq 1<\ell_z+1=r_z<\ell_y,
\]
which implies $x<y$.

Now we can finish the proof in the familiar way. We define two choice functions: $f_1(x)=\ell_x$, $f_2(x)=r_x$ if $x\in S_{2i}$, and $f_1(x)=r_x$, $f_2(x)=\ell_x$ if $x\in S_{2i+1}$ for some $i$. Construct the two corresponding linear extensions $L_1=L(f_1)$, $L_2=L(f_2)$. Finally, let $L_3$ be such that $(L_1|_{S_1})^d<\cdots<(L_1|_{S_{m+k+1}})^d$ in $L_3$. Then $\{L_1,L_2,L_3\}$ is a realizer.
\end{proof}

\bibliographystyle{plain}
\bibliography{dimbounds}

\begin{thebibliography}{10}

\bibitem{BKKM-12}
Bart{\l}omiej Bosek, Kamil Kloch, Tomasz Krawczyk, and Piotr Micek.
\newblock On-line version of {R}abinovitch theorem for proper intervals.
\newblock {\em Discrete Math.}, 312(23):3426--3436, 2012.

\bibitem{COS-12}
M\'{a}rcia~R. Cerioli, Fabiano de~S.~Oliveira, and Jayme~L. Szwarcfiter.
\newblock The interval count of interval graphs and orders: a short survey.
\newblock {\em J. Braz. Comput. Soc.}, 18(2):103--112, 2012.

\bibitem{FG-85}
P.~C. Fishburn and R.~L. Graham.
\newblock Classes of interval graphs under expanding length restrictions.
\newblock {\em J. Graph Theory}, 9(4):459--472, 1985.

\bibitem{F-70}
Peter~C. Fishburn.
\newblock Intransitive indifference with unequal indifference intervals.
\newblock {\em J. Mathematical Psychology}, 7:144--149, 1970.

\bibitem{F}
Peter~C. Fishburn.
\newblock {\em Interval Orders and Interval Graphs: A Study of Partially
  Ordered Sets}.
\newblock Wiley, 1985.

\bibitem{FHRT-92}
Z.~F\"{u}redi, P.~Hajnal, V.~R\"{o}dl, and W.~T. Trotter.
\newblock Interval orders and shift graphs.
\newblock In {\em Sets, graphs and numbers ({B}udapest, 1991)}, volume~60 of
  {\em Colloq. Math. Soc. J\'{a}nos Bolyai}, pages 297--313. North-Holland,
  Amsterdam, 1992.

\bibitem{KTY-22}
Mitchel~T. Keller, Ann~N. Trenk, and Stephen~J. Young.
\newblock Dimension of restricted classes of interval orders.
\newblock {\em Graphs Combin.}, 38(5):Paper No. 137, 11, 2022.

\bibitem{KT-00}
H.~A. Kierstead and W.~T. Trotter.
\newblock Interval orders and dimension.
\newblock {\em Discrete Math.}, 213(1-3):179--188, 2000.

\bibitem{LAP-82}
R.~Leibowitz, S.~F. Assmann, and G.~W. Peck.
\newblock The interval count of a graph.
\newblock {\em SIAM J. Algebraic Discrete Methods}, 3(4):485--494, 1982.

\bibitem{R-78}
I.~Rabinovitch.
\newblock The dimension of semiorders.
\newblock {\em J. Combin. Theory Ser. A}, 25(1):50--61, 1978.

\bibitem{SS-58}
Dana Scott and Patrick Suppes.
\newblock Foundational aspects of theories of measurement.
\newblock {\em J. Symbolic Logic}, 23:113--128, 1958.

\bibitem{T}
William~T. Trotter.
\newblock {\em Combinatorics and Partially Ordered Sets: Dimension Theory}.
\newblock Johns Hopkins University Press, 1992.

\end{thebibliography}

\ifams\else
\section*{Statements and Declarations}

The authors declare that no funds, grants, or other support were received during the preparation of this manuscript.

The authors have no relevant financial or non-financial interests to disclose.
\fi

\end{document}